\theoremstyle{plain}
\newtheorem{thm}{Theorem}[section]
\theoremstyle{definition}
\newtheorem{rem}[thm]{Remark}
\newtheorem{expl}[thm]{Example}
\DeclareMathOperator{\lcm}{lcm}
\def\Irr{{\rm Irr}}
\def\II{{\rm I}}
\def\ord{{\mathrm{ord}}}
\def\Q{{\mathbb{Q}}}
\def\F{{\mathbb{F}}}
\def\Z{{\mathbb{Z}}}
\theoremstyle{plain}
\title{Counting Irreducible Representations of a Finite Abelian Group}
\author{Thomas Breuer\footnote{Lehrstuhl f\"{u}r Algebra und Zahlentheorie, RWTH Aachen University, Pontdriesch 14/16, 52062 Aachen, Germany; \ E-mail: sam@math.rwth-aachen.de.}, Prashun Kumar\footnote{Corresponding author, Dr. B. R. Ambedkar University Delhi, Delhi 110006, India; \ E-mails: prashunkumar.19@stu.aud.ac.in,  prashun07kumar@gmail.com.}
\ and \ Geetha Venkataraman\footnote{ Dr. B. R. Ambedkar University Delhi, Delhi 110006, India; \ E-mails: geetha@aud.ac.in,  geevenkat@gmail.com.}}
\begin{document}

\maketitle

\abstract{%
Let $q$ be a power of a prime $p$,
$G$ be a finite abelian group, where $p \nmid |G|$,
and let $n$ be a positive integer.
In this paper we find a formula for the number of irreducible representations
of $G$ of a given dimension $n$ over the field of order $q$,
up to equivalence, using Brauer characters.
We also provide a formula for such $n$ using the prime decomposition
of the exponent of $G$ and an algorithm to compute the irreducible degrees and their multiplicities.}

\medskip
\noindent
{\small{\bf Keywords}{:} } abelian groups, Brauer characters, irreducible representations, irreducible modules, epimorphisms.

\medskip
\noindent
{\small{\bf Mathematics Subject Classification-MSC2020}{:} }
20H30, 20K01, 20K27, 20K30.

\section{Introduction}

In this paper we consider irreducible representations of a finite abelian group
$G$ over a finite field in characteristic coprime to $|G|$.
Several papers and books deal with counting irreducible (modular)
representations of a group $G$ over a given field,
for example see \cite{SDB1956, IR1964, PW2016}.
However, these do not provide a closed formula or give an explicit count.
Neither do they provide a formula of the possible degrees
of these irreducible representations.

Let $G$ and $H$ be abelian groups of order $n$.
In \cite{PW1950}, the authors find all fields ${\F}$ such that
${\F}G \cong {\F}H$ as group algebras.
In the same paper they also show that
$${\F}G = \displaystyle\sum_{d \mid n} a_d {\F}(\zeta_d)$$
where $G$ is an abelian group of order $n$ and ${\F}$ is a field
whose characteristic does not divide $n$.
Here $\zeta_d$ is a primitive $d$-th root of unity and $a_d$ is a
non-negative integer.
While every irreducible ${\F}G$-module occurs in the summation,
an explicit formula for the number of irreducible ${\F}G$-modules
up to isomorphism of a given dimension is not provided
nor is a formula for the $m$ that can appear as the degree of an
irreducible ${\F}G$-module.
Let $p$ and $r$ be distinct primes.
A formula for the number of irreducible representations of an
elementary abelian $r$-group up to equivalence over a finite field
of order $p$ has been given in \cite{GV1993}.

Let $G$ be a finite abelian group and let ${\F}$ be a finite field such that ${\rm char}$ ${\F} \nmid |G|$. The objective of this paper is to provide a formula for the number of irreducible representations of $G$ over ${\F}$ of a given degree $n$ up to equivalence and to give an explicit formula for the total number of irreducible representations of $G$ over ${\F}$ up to equivalence. We use the theory of Brauer characters to establish this. We refer to \cite[chapter 10, page 192]{PW2016} for an introduction to the theory of Brauer characters. Since there is a one-to-one correspondence between the set of all representations of a finite group $G$ having an abelian image and the set of all representations of $G/G'$, our formula also gives the number of irreducible representations of a finite group $G$ having an abelian image.

We find all the degrees of the irreducible representations of $G$ over ${\F}$ and use it to find the formula for the total number of irreducible representations of $G$ over ${\F}$. Further we provide an algorithm to find these irreducible degrees and their multiplicities.

Throughout this article,
$p$ is a prime, $q$ is a power of $p$,
and $\F_q$ is a finite field of order $q$.

\section{Irreducible representations of a finite abelian group}

Let $G$ be a finite abelian group,
and let $\Irr(G)$ denote the absolutely irreducible characters of $G$
over the complex numbers.
Each $\chi \in \Irr(G)$ has degree $1$,
and all values of $\chi$ are complex roots of unity.
Thus the character field ${\Q}(\chi) = {\Q}(\{ \chi(g); g \in G \})$
is a cyclotomic field ${\Q}(\zeta_d)$,
where $\zeta_d$ is a primitive $d$-th complex root of unity.
For each positive integer $d$,
let $\II_d(G) = \{ \chi \in \Irr(G); \Q(\chi) = \Q(\zeta_d) \}$.

By the identification of the (multiplicative) group $\langle \zeta_d \rangle$
with the (additive) group $\Z/d\Z$ of residues modulo $d$,
we can interpret $\II_d(G)$ as the set of epimorphisms from $G$ to $\Z/d\Z$.

The multiplicative order of $q$ modulo $d$,
for two integers $q$ and $d$, is the smallest positive integer $n$
such that $d$ divides $(q^n - 1)$; it is denoted by $\ord(q \bmod d)$.
In particular, we have $\ord(q \bmod 1) = 1$ for all $q$.

\begin{thm} \label{alternative}
  Let $G$ be a finite abelian group of exponent $e$,
  and let $q$ be a prime power coprime to $|G|$.

  Each irreducible representation of $G$ over the field $\F_q$
  with $q$ elements has degree $\ord(q \bmod d)$,
  for some divisor $d$ of $e$.

  The number of irreducible representations of $G$ of degree $n$
  over $\F_q$ is
  \[
     \frac{1}{n} \sum_{d \in D_n} |\II_d(G)|,
  \]
  where
  $D_n = \{d \mid e ; \ \ord(q \bmod d) = n, \ \textrm{$d$ even if $e$ is even}\}$.
\end{thm}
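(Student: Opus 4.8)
The strategy is to pass from representations over $\F_q$ to absolutely irreducible representations over $\overline{\F_q}$ (or equivalently, via reduction mod $p$, to Brauer characters), and to track the action of the Galois group $\mathrm{Gal}(\overline{\F_q}/\F_q)$ on these. Since $p \nmid |G|$, the group algebra $\F_q G$ is semisimple, and every irreducible $\overline{\F_q} G$-module is one-dimensional; by the theory of Brauer characters (since $p \nmid |G|$ the $p$-regular classes are all of $G$), the Brauer characters of $G$ over $\overline{\F_q}$ are exactly the ordinary complex irreducible characters $\Irr(G)$, reinterpreted via a fixed isomorphism $\overline{\F_q}^\times \supset \langle \zeta_e \rangle \cong \langle \zeta_e \rangle \subset \mathbb{C}^\times$ of the groups of roots of unity of order dividing $e = \exp(G)$. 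Under this identification the Frobenius $x \mapsto x^q$ on $\overline{\F_q}$ corresponds to the automorphism $\zeta_e \mapsto \zeta_e^q$ of $\mathbb{C}(\zeta_e)$, which makes sense precisely because $\gcd(q,e)=1$.

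First I would recall the general fact (a standard consequence of Brauer-character theory over finite fields; see the reference to \cite[Chapter 10]{PW2016}) that the irreducible $\F_q G$-modules are in bijection with the orbits of $\langle \sigma_q \rangle$ on $\Irr(G)$, where $\sigma_q$ is the Frobenius-induced map $\chi \mapsto \chi^q$ (defined by $\chi^q(g) = \chi(g)^q$), and that the dimension over $\F_q$ of the module corresponding to an orbit equals the size of that orbit. So the count of degree-$n$ irreducibles of $G$ over $\F_q$ is the number of $\langle \sigma_q \rangle$-orbits on $\Irr(G)$ of size exactly $n$. By Burnside/orbit counting, the number of size-$n$ orbits contained in a $\sigma_q$-invariant subset $S$ equals $\tfrac{1}{n}\,\#\{\chi \in S : |\text{orbit of }\chi| = n\}$.

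Next I would compute the orbit size of a character $\chi \in \II_d(G)$. Identifying $\II_d(G)$ with the set of epimorphisms $G \twoheadrightarrow \Z/d\Z$ (as set up in the excerpt), the map $\sigma_q$ sends an epimorphism $\phi$ to $q\cdot\phi$ (multiplication by $q$ in $\Z/d\Z$), which is again an epimorphism since $\gcd(q,d)=1$ (as $d \mid e \mid |G|$... more precisely $d\mid e$ and $q$ is coprime to $|G|$). Hence the orbit of $\chi$ under $\langle\sigma_q\rangle$ has size equal to the multiplicative order of $q$ modulo $d$, i.e.\ $\ord(q \bmod d)$, and this depends only on $d$, not on the particular $\chi \in \II_d(G)$. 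Since $\Irr(G) = \bigsqcup_{d \mid e} \II_d(G)$ and each $\II_d(G)$ is $\sigma_q$-stable with all orbits of common size $\ord(q\bmod d)$, every irreducible $\F_q G$-module has degree $\ord(q \bmod d)$ for some $d \mid e$, giving the first assertion. Collecting the $d$'s with $\ord(q \bmod d) = n$, orbit counting gives the degree-$n$ count as $\tfrac1n \sum_{d : \ord(q\bmod d) = n} |\II_d(G)|$.

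The one genuine subtlety — and the step I expect to be the main obstacle — is the parity condition ``$d$ even if $e$ is even'' in the definition of $D_n$. This reflects the fact that $\II_d(G)$ can be empty: an epimorphism $G \twoheadrightarrow \Z/d\Z$ exists iff $d$ divides $e$, but additionally, when considering which $d$ can occur as a character field conductor, one must account for the redundancy $\mathbb{Q}(\zeta_d) = \mathbb{Q}(\zeta_{2d})$ when $d$ is odd. I would handle this by normalizing so that each cyclotomic field $\mathbb{Q}(\zeta_m)$ is written with its canonical conductor (even $m$, or $m=1$), so that $\II_d(G) = \varnothing$ whenever $e$ is even but $d$ is odd with $d>1$ — hence summing over $d \in D_n$ versus over all $d \mid e$ with $\ord(q\bmod d)=n$ gives the same answer, but the restricted form avoids double-counting the same orbits. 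I would verify carefully that (a) this parity normalization is consistent with the identification $\langle\zeta_d\rangle \cong \Z/d\Z$ used to define $\II_d(G)$, and (b) when $e$ is odd every $d \mid e$ is automatically admissible, so $D_n$ reduces to $\{d \mid e : \ord(q\bmod d)=n\}$ as expected. Everything else — semisimplicity, the orbit-size computation, orbit counting — is routine once the Brauer-character dictionary is in place.
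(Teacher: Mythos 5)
Your proposal is correct and follows essentially the same route as the paper: identify $\Irr(G)$ with the Brauer characters, realize the $\F_q$-irreducibles as orbits of the Frobenius map $\chi \mapsto \chi^q$ with degree equal to orbit length, observe that each $\II_d(G)$ is invariant with all orbits of length $\ord(q \bmod d)$, and divide. The only cosmetic difference is that you compute the orbit length directly from the action of multiplication by $q$ on epimorphisms $G \twoheadrightarrow \Z/d\Z$, whereas the paper derives it as $m/\gcd(k,m)$ from the Galois theory of $\F_{p^m}/\F_{p^{\gcd(k,m)}}$; your handling of the $\Q(\zeta_d)=\Q(\zeta_{2d})$ double-counting matches the paper's restriction to even divisors when $e$ is even.
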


\begin{proof}
The set $\Irr(G)$ is the disjoint union of the sets $\II_d(G)$,
where $d$ runs over all divisors of $e$ if $e$ is odd,
and over all even divisors of $e$ otherwise.
(Note that $\zeta_2 = -1 \in \Q$ holds,
hence we have $\II_d(G) = \II_{2d}(G)$ if $d$ is odd.)

Let $p$ be a prime not dividing $|G|$.
We identify $\Irr(G)$ with the set $\Irr_p(G)$ of absolutely irreducible
Brauer characters of $G$ in characteristic $p$,
and identify our sets $\II_d(G)$ with the corresponding subsets of $\Irr_p(G)$.

We show that for the Brauer character $\chi \in \II_d(G)$,
the $\F_q$-irreducible character that has $\chi$ as a constituent
has degree $\ord(q \bmod d)$.

Let $\chi$ be a $p$-modular Brauer character in $\II_d(G)$.
It can be realized by a representation over a field with $p^m$ elements
if this field contains a primitive $d$-th root of unity,
that is, if $d$ divides $p^m - 1$.
The minimal field in characteristic $p$ over which $\chi$ can be
realized is given by $m = \ord(p \bmod d)$.

Let $q = p^k$, for some positive integer $k$.
The $\F_q$-irreducible character that has $\chi$ as a constituent
arises as an orbit sum under the action of the Galois group of the field
extension $\F_{p^m}/\F_{p^t}$, where $\F_{p^t} = \F_{p^k} \cap \F_{p^m}$,
that is, $t = \gcd(k, m)$.
This implies that the orbit of $\chi$ has length $m/t = \ord(q \bmod d)$.
Note that the Galois action is semiregular,
and that $\II_d(G)$ is closed under this action:
A generator of the Galois group is the map
$\ast q: \II_d \rightarrow \II_d, \chi \mapsto \chi^q$,
where $\chi^q(g) = \chi(g)^q$.

Hence $\II_d(G)$ contributes $|\II_d(G)|/\ord(q \bmod d)$ characters
of degree $\ord(q \bmod d)$ to the set of $\F_q$-irreducibles.
\end{proof}

\begin{rem}
Using the factorization of the exponent $e$ into prime powers,
we can interpret Theorem~\ref{alternative} as follows.
Let $P$ be the set of prime divisors of $|G|$.
Since
\[
   \ord\left(q \bmod \prod_{r \in P} r^{i_r}\right) =
     \lcm\left\{ \ord\left(q \bmod r^{i_r}\right), r \in P \right\},
\]
the degrees that occur are lcm's of values $\ord(q \bmod r^{i_r})$,
for $r \in P$.
In particular, the $r'$-part of each $\ord(q \bmod r^{i_r})$ is given by
$\ord(q \bmod r)$.
\end{rem}

\begin{rem}
It remains to compute the cardinalities $|\II_d|$.
For that, we write $G = \bigoplus_{r \in P} G_r$
where $P$ is the set of prime divisors of $|G|$
and $G_r$ is the Sylow $r$-subgroup of $G$.
We assume that $G_r$ has the structure
$\bigoplus_{j=1}^{n_r} \Z/r^{a(r,j)}\Z$,
where $\Z/m\Z$ denotes a cyclic group of order $m$
and the $a(r,j)$ are positive integers,
$1 \leq a(r, 1) \leq a(r, 2) \leq \cdots \leq a(r, n_r)$.
Let $e$ denote the exponent of $G$.
Then we have $e = \prod_{r \in P} r^{a(r, n_r)}$.

Mapping $\chi \in \Irr(G)$ to the vector $(\chi_{G_r}; r \in P)$
of restrictions to the Sylow $r$-subgroups defines a bijection
from $\Irr(G)$ to the Cartesian product of $\Irr(G_r)$, for $r \in P$.
For $\chi \in \II_d(G)$,
the restriction $\chi_{G_r}$ has character field $\Q(\zeta_{d_r})$,
where $d_r$ is the $r$-part of $d$.
Thus we get $|\II_d(G)| = \prod_{r \in P} |\II_{d_r}(G_r)|$.
Since it is much easier to deal with the $r$-groups $G_r$ than with $G$,
we compute the cardinalities $|\II_{d_r}(G_r)|$.

For nonnegative $l$ such that $r^l$ divides the exponent of $G_r$,
let $N(G_r,l) = \{ g^{r^l}; g \in G_r \}$ be the kernel of the epimorphism
from $G_r$ to its largest factor group of exponent $r^l$.
In particular $N(G_r,0) = G_r$.
The factor group $F(G_r, l) = G_r/N(G_r, l)$ has the structure
$\bigoplus_{j=1}^{r_p} \Z/r^{\min\{l, a(r, j)\}}\Z$.
For $l \geq 1$, we have
\[
  \II_{r^l}(G_r) = \{ \chi \in \Irr(G_r);
                      N(G_r, l) \subseteq \ker(\chi),
                      N(G_r, l-1) \not\subseteq \ker(\chi) \},
\]
except that
$\II_2(G_2) = \{ \chi \in \Irr(G_r); N(G_2, 1) \subseteq \ker(\chi) \}$.
Note that for $\chi \in \II_{r^l}(G_r)$,
we have $\chi(g^{r^l}) = \chi(g)^{r^l} = 1$ for all $g \in G_r$,
and there is an element $g \in G_r$ such that $\chi(g) = \zeta_{r^l}$,
thus $\chi(g^{r^{l-1}}) \not= 1$.
This implies
\[
  |\II_{r^l}(G_r)| = |F(G_r, l)| - |F(G_r, l-1)| =
  \prod_{j=1}^{n_r} r^{\min\{l, a(r, j)\}}
  - \prod_{j=1}^{n_r} r^{\min\{l-1, a(r, j)\}},
\]
except that $|\II_2(G_2)| = 2^{n_2}$.
\end{rem}

\begin{rem}
The fact that the sets $\II_d(G)$ are nonempty for all divisors $d$ of
$\exp(G)$ implies that any two finite abelian groups of the same exponent
have the exact same list of integers that are degrees of
irreducible representations of these groups over a given finite field
whose characteristic is coprime to the common exponent.
The group structure does not matter.
However when we count the number of irreducible representations for each,
of a given possible degree $m$,
then the cardinality of $\II_d(G)$ and hence the structure of the group
comes into play.
\end{rem}

\begin{rem}
The above statements yield the following algorithm
for computing the irreducible degrees and their multiplicities.

The \emph{input} consists of
a power $q$ of a prime $p$
and the description of an abelian group $G$
by a set $P$ of primes and the $a(r, i)$, as defined above,
where $p$ is not in $P$.

The \emph{output} is the sequence
$(n_1, k_1), (n_2, k_2), \ldots, (n_r, k_r)$,
such that the degrees of the irreducible representations of $G$
over the field with $q$ elements are $n_1, n_2, \ldots, n_r$,
and their multiplicities are $k_1, k_2, \ldots, k_r$, respectively.

We proceed as follows.

\begin{enumerate}
\item
  Compute the exponent $e$ of $G$ as $e = \prod_{r \in P} a(r, n_r)$.
\item
  For $r \in P$,
  precompute the cardinalities $|\II_{r^i}(G_r)|$, $1 \leq i \leq a(r, n_r)$.
\item
  For the divisors $d$ of $e$ (or only the even divisors if $2 \in P$),
  compute the orbit length $n = \ord(q \bmod d)$
  of the Galois group in question on $\II_d(G)$,
  compute $|\II_d(G)|$ from the involved $|\II_{d_r}(G_r)|$,
  and record $|\II_d(G)|/n$ irreducibles of degree $n$.
\item
  For each character degree $n$ that occurs,
  sum up the multiplicities of $n$ arising for different divisors $d$.
  Output the degrees and their total multiplicities.
\end{enumerate}
\end{rem}

\begin{expl}
Let $G$ be an elementary abelian group of order $r^m$,
for some prime $r$ not dividing the fixed prime power $q$,
and set $n = \ord(q \bmod r)$.

If $n = 1$ then $\F_q$ contains $r$-th roots of unity,
$D_1 = \{ 1, r \}$, and there are $|\Irr(G)| = |\II_1 \cup \II_r|$
irreducibles of degree $1$ over $\F_q$.

Otherwise we have $D_1 = \{ 1 \}$, $D_n = \{ r \}$,
and there are $|\II_1| = 1$ irreducibles of degree $1$ and
$|\II_r|/n = (r^m-1)/n$ irreducibles of degree $n$ over $\F_q$.

This is a well known result and is given in \cite[Lemma 2.3.4, page-26]{GV1993}.
\end{expl}

\begin{expl}
Let $G = \Z/9\Z \times \Z/5\Z$, a cyclic group of order $45$,
and consider its irreducible representations over finite fields $\F_{2^m}$,
$1 \leq m \leq 12$.
\[
   \begin{array}{ll}
     1, 2, 4^3, 6, 12^2 & m \in \{ 1, 5, 7, 11 \} \\
     1^3, 2^6, 3^2, 6^4 & m \in \{ 2, 10 \} \\
     1, 2^4, 4^9        & m \in \{ 3, 9 \}
   \end{array} \hspace*{2cm}
   \begin{array}{ll}
     1^{15}, 3^{10}     & m \in \{ 4, 8 \} \\
     1^9, 2^{18}        & m \in \{ 6 \} \\
     1^{45}             & m \in \{ 12 \}
   \end{array}
\]
The notation $1^9, 2^{18}$ means that there are $9$ linear characters
and $18$ irreducible characters of degree $2$.
\end{expl}

\section{Acknowledgements}

Thomas Breuer is supported by the German Research Foundation (DFG)
-- Project-ID 286237555 --
within the SFB-TRR 195 Symbolic Tools in Mathematics and their Applications.

\noindent Prashun Kumar would like to acknowledge the UGC-SRF grant ({\emph{identification number}}: 201610088501) which is enabling his doctoral work.


\end{document}